\newcommand\BB{{\mathcal B}}
\newtheorem{theorem}{Theorem}[section]
\newtheorem{lemma}{Lemma}[section]
\newtheorem{corollary}{Corollary}[section]
\theoremstyle{definition}
\newcommand{\overdouble}[1]{\overline{\overline{#1}}}
\begin{document}

\title{Ultrafilters on metric Spaces}
\author{I.V. Protasov}
\date{}
\subjclass{54E15, 54D35, 22A15}
\keywords{ultrafilter, metric space, ballean, parallel equivalence, ultracompanion}

\begin{abstract}
Let $X$ be an unbounded metric space, $B(x,r) = \{y\in X: d(x,y) \leqslant r\}$ for all $x\in X$ and $r\geqslant 0$. We endow $X$ with the discrete topology and identify the Stone-\v{C}ech compactification $\beta X$ of $X$ with the set of all ultrafilters on $X$. Our aim is to reveal some features of algebra in $\beta X$ similar to the algebra in the Stone-\v{C}ech compactification of a discrete semigroup \cite{b6}.

We denote $X^\# = \{p\in \beta X: \mbox{ each }P\in p\mbox{ is unbounded in }X\}$ and, for $p,q \in X^\#$, write $p\parallel q$ if and only if there is $r \geqslant 0$ such that $B(Q,r)\in p$ for each $Q\in q$, where $B(Q, r)=\cup_{x\in Q}B(x,r)$. A subset $S\subseteq X^\#$ is called invariant if $p\in S$ and $q\parallel p$ imply $q\in S$. We characterize the minimal closed invariant subsets of $X$, the closure of the set $K(X^\#) = \bigcup\{M : M\mbox{ is a minimal closed invariant subset of }X^\#\}$, and find the number of all minimal closed invariant subsets of $X^\#$. 

For a subset $Y\subseteq X$ and $p\in X^\#$, we denote $\bigtriangleup_p(Y) = Y^\# \cap \{q\in X^\#: p \parallel q\}$ and say that a subset $S\subseteq X^\#$ is an ultracompanion of $Y$ if $S = \bigtriangleup_p(Y)$ for some $p\in X^\#$. We characterize large, thick, prethick, small, thin and asymptotically scattered spaces in terms of their ultracompanions.

\end{abstract}

\maketitle

\section{Introduction}

Let $X$ be a discrete space, $\beta X$ be the Stone-\v{C}ech compactification of $X$. We take the points of $\beta X$ to be the ultrafilters on $X$, with the points of $X$ identified with the principal ultrafilters, so $X^*=\beta X\setminus X$ is the set of all free ultrafilters. The topology of $\beta X$ can be defined by stating that the set of the form $\overline{A} = \{p\in \beta X: A\in p\}$, where $A$ is a subset of $X$, are base for the open sets. The universal property of $\beta X$ states that every mapping $f:X\to Y$, where $Y$ is a compact Hausdorff space, can be extended to the continuous mapping $f^\beta: \beta X \to Y$.

If $S$ is a discrete semigroup, the semigroup multiplication has a natural extension to $\beta S$, see \cite[Chapter 4]{b6}. The compact right topological semigroup $\beta S$ has a plenty of applications to combinatorics, topological algebra and functional analysis, see \cite{b3}, \cite{b5}, \cite{b6}, \cite{b21}, \cite{b22}.

Now let $(X,d)$ be a metric space, $B(x,r) = \{y\in X: d(x,y)\leqslant r\}$ for all $x\in X$ and $r\geqslant 0$. A subset $V$ of $X$ is {\it bounded} if $V\subseteq B(x,r)$ for some $x\in X$ and $r\geqslant 0$. We suppose that $X$ is {\bf unbounded}, endow $\beta X$ with the {\bf discrete} topology and, for a subset $Y$ of $X$, put

$$Y^\# = \{p\in \beta X:\mbox{ each }P\in p\mbox{ is unbounded in }X\},$$
and note that $Y^\#$ is closed in $\beta X$.

For $p,q \in X^\#$, we write $p\parallel q$ if and only if there is $r\geqslant 0$ such that $B(Q,r)\in p$ for each $Q\in q$, where $B(Q, r)=\cup_{x\in Q}B(x,r)$. The parallel equivalence was introduced in \cite{b8} in more general context of balleans and used in \cite{b1}, \cite{b10}, \cite{b11}, \cite{b12}, \cite{b13}.

For $p\in X^\#$, we denote $\overdouble{p} = \{q\in X^\#: p \parallel q\}$ and say that a subset $S$ of $X^\#$ is invariant if $\overdouble{p}\subseteq S$ for each $p\in S$. Every nonempty closed invariant subset of $X^\#$ contains some non-empty minimal (by inclusion) closed invariant subset. We denote
$$K(X^\#) = \bigcup\{M: M\mbox{ is a minimal closed invariant subset of }X^\#\}.$$

After short technical Section 2, we show in Section 3 how one can detect whether $S\subseteq X^\#$ is a minimal closed invariant subset, and whether $q\in X^\#$ belongs to the closure of $K(X^\#)$ in $X^\#$. We prove that the set  of all minimal closed invariant subsets of $X^\#$ has cardinality $2^{2^{asden\,X}}$, where $asden\, X = \min\{|L|: L\subseteq X\mbox{ and }X=B(L,r)\mbox{ for some }r\geqslant 0\}$.

In Section 5 we show that from the ballean point of view the minimal closed invariant subsets are counterparts of the minimal left ideal in $\beta G$, where $G$ is a discrete group. Thus, the results of Section 3 are parallel to Theorems 4.39, 4.40 and 6.30(1) from \cite{b6}.

In Section 3 we use the following classification of subsets of a metric space. We say that a subset $Y$ of $X$ is
\begin{itemize}
\item {\it large} if $X=B(Y,r)$ for some $r\geqslant 0$;
\item {\it thick} if, for every $r\geqslant 0$, there exists $y\in Y$ such that $B(y,r)\subseteq Y$;
\item {\it prethick} if $B(Y,r)$ is thick for some $r\geqslant 0$;
\item {\it small} if $X\setminus Y \cap L$ is large for every large subset $L$ of $X$;
\item {\it thin} if, for each $r\geqslant 0$, there exists a bounded subset $V$ of $X$ such that $B(y,r)\cap Y = \{y\}$ for each $y\in Y\setminus V$.
\end{itemize}

We note that $Y$ is small if and only if $Y$ is not prethick \cite[Theorem 11.1]{b15}, and the family of all small subsets of $X$ is an ideal in the Boolean algebra of all subsets of $X$ \cite[Theorem 11.2]{b15}. Hence, for every finite partition of $X$, at least one cell is prethick.

For $p\in X^\#$ and $Y\subseteq X$, we put
$$\displaystyle\bigtriangleup\nolimits_p(Y) = \overdouble{p}\cap Y^\#,$$
and say that $\bigtriangleup_p(Y)$ is a {\it $p$-companion} of $Y$. A subset $S\subseteq X^\#$ is called an {\it ultracompanion} of $Y$ if $S = \bigtriangleup_p(Y)$ for some $p\in X^\#$.

In Section 4 we characterize all above defined subsets of $X$ and asymptotically scattered subsets from \cite{b14} in terms of their ultracompanions.
	
During the exposition, $X$ is an {\bf unbounded metric space}.

\section{Parallelity}

\begin{lemma} Let $Y$ be a subset of $X$, $r\geqslant 0$, $q\in X^\#$. If $B(Y,r)\in q$ then there exists $s\in Y^\#$ such that $q\parallel s$.
\end{lemma}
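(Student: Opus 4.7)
The plan is to pull $s$ out of $q$ by choosing, for every point of $B(Y,r)$, a nearby witness in $Y$ and then pushing $q$ forward through this selection map. Concretely, I would use the axiom of choice to fix, for each $x\in B(Y,r)$, a point $f(x)\in Y$ with $d(x,f(x))\leqslant r$; extend $f$ arbitrarily to a map $f:X\to Y$ (say by sending $X\setminus B(Y,r)$ to a fixed basepoint of $Y$) and set $s=f^\beta(q)$. Since $f(X)\subseteq Y$, the ultrafilter $s$ automatically contains $Y$.

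To verify that $s\in Y^\#$, the nontrivial step is to show that every $S\in s$ is unbounded. Given $S\in s$ we have $f^{-1}(S)\in q$, and combining this with $B(Y,r)\in q$ yields $f^{-1}(S)\cap B(Y,r)\in q$. By construction, every $x$ in this intersection satisfies $d(x,f(x))\leqslant r$ and $f(x)\in S$, so
$$f^{-1}(S)\cap B(Y,r)\subseteq B(S,r).$$
If $S$ were bounded then $B(S,r)$ would be bounded as well, producing a bounded member of $q$ and contradicting $q\in X^\#$.

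The parallelism $q\parallel s$ is then witnessed by the same radius $r$: for an arbitrary $S\in s$, the displayed inclusion together with the fact that its left-hand side lies in $q$ forces $B(S,r)\in q$, which is exactly what is required.

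The whole argument is really an exercise in unpacking definitions once the push-forward $s=f^\beta(q)$ has been introduced. The main (mild) obstacle is the unboundedness step: one must notice that preimages of bounded sets under $f$ meet $B(Y,r)$ in bounded sets, which is precisely the observation that lets the hypothesized radius $r$ double as the parallelism constant.
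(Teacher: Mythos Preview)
Your argument is correct. The paper takes a slightly different, more compressed route: rather than choosing a selection map $f$ and pushing $q$ forward, it directly forms the family $\{B(Q,r)\cap Y : Q\in q\}$, observes it has the finite intersection property (since $B(Y,r)\in q$ makes each such set nonempty), and extends it to an ultrafilter $s$; parallelism is then immediate because $B(Q,r)\cap Y\subseteq B(Q,r)$.

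The two constructions are closely related---in fact your $s=f^\beta(q)$ is one particular ultrafilter extending the paper's filter base---but the emphases differ. Your push-forward approach makes $s$ canonical once $f$ is chosen and lets you verify $s\in Y^\#$ and $q\parallel s$ simultaneously via the single inclusion $f^{-1}(S)\cap B(Y,r)\subseteq B(S,r)$. The paper's filter-base approach avoids invoking the universal property of $\beta X$ and a choice function on points, at the cost of leaving the verification that $s\in Y^\#$ (i.e.\ that no bounded set enters $s$) implicit; your write-up is actually more careful on that point.
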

\begin{proof} For each $Q\in q$, we denote $S_Q = B(Q,r)\cap Y$. Since $B(Y,r) \in q$, we have $S_Q \ne \varnothing$. The family $\{S_Q: Q\in q\}$ is contained in some ultrafilter $s\in Y^\#$. Clearly, $s\parallel q$.
\end{proof}

\begin{lemma} The closure $cl\, S$ in $X^\#$ of an invariant subset $S$ of $X^\#$ is invariant.
\end{lemma}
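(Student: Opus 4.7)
The plan is to unwind the definitions: the basic neighborhoods of $p$ in $X^\#$ are $\overline{P}\cap X^\#$ for $P\in p$, so showing $p\in cl\,S$ amounts to producing, for each $P\in p$, some $t\in S$ with $P\in t$. Fix $q\in cl\,S$ and $p\in\overdouble{q}$; I want to show $p\in cl\,S$. Pick an arbitrary $P\in p$.

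Since $\parallel$ is a symmetric relation (the ``parallel equivalence''), $p\parallel q$ gives an $r\geqslant 0$ with $B(P',r)\in q$ for every $P'\in p$; taking $P'=P$ yields $B(P,r)\in q$. Because $q\in cl\,S$ and $\overline{B(P,r)}\cap X^\#$ is an open neighborhood of $q$, it must meet $S$: choose $s\in S$ with $B(P,r)\in s$.

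Now apply the preceding lemma (Lemma 2.1) with $Y=P$ and with the ultrafilter $s$ playing the role of $q$: since $B(P,r)\in s$, there exists $t\in P^\#$ such that $s\parallel t$, i.e., $t\in\overdouble{s}$. By the invariance of $S$ and $s\in S$ we get $t\in S$, and by construction $P\in t$, so $t\in \overline{P}\cap S$. Since $P\in p$ was arbitrary, $p\in cl\,S$, which is the conclusion.

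The only non-routine step is the symmetry invocation and the clean application of Lemma 2.1 to extract an ultrafilter on the small set $P$; everything else is the standard description of the topology on $\beta X$ restricted to $X^\#$. I don't anticipate any serious obstacle, provided the reader is willing to use that $\parallel$ is an equivalence relation (which is the content of its name and is standard in the ballean literature cited).
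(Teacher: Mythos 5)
Your proof is correct and is essentially the paper's own argument: for each $P\in p$ you use the relation to get $B(P,r)\in q$, use $q\in cl\,S$ to find $s\in S$ with $B(P,r)\in s$, and then apply Lemma 2.1 together with invariance of $S$ to produce $t\in S\cap P^\#$, showing every basic neighborhood of $p$ meets $S$. The appeal to symmetry of $\parallel$ is harmless (and in fact unnecessary, since $p\in\overdouble{q}$ already means $q\parallel p$, which is exactly the direction you use); the paper treats $\parallel$ as an equivalence in the same way.
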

\begin{proof} Let $p\in cl\, S$, $q\in X^\#$ and $q\parallel p$. We take $r\geqslant 0$ such that $B(Q, r)\in p$ for each $Q\in q$. Then we fix $Q\in q$ and pick $p'\in S$ such that $B(Q,r)\in p'$. By Lemma 2.1, there is $q' \in Q^\#$ such that $q'\parallel p'$. It follows that $q\in cl\, S$.
\end{proof}

\begin{lemma} Each nonempty closed invariant subset $S$ of $X^\#$ contains a nonempty minimal closed invariant subset.
\end{lemma}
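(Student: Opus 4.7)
The plan is a direct application of Zorn's lemma to the family of nonempty closed invariant subsets of $S$ ordered by reverse inclusion, relying on compactness of $X^\#$ to verify the chain condition.

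First I would fix the family $\mathcal{F}$ of all nonempty closed invariant subsets of $X^\#$ contained in $S$, partially ordered by reverse inclusion $\supseteq$; the set $S$ itself belongs to $\mathcal{F}$, so the family is nonempty. To apply Zorn's lemma I need to check that every chain $\mathcal{C} \subseteq \mathcal{F}$ has an upper bound in this order, i.e.\ that $T = \bigcap \mathcal{C}$ is again a member of $\mathcal{F}$.

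The closedness of $T$ is immediate as an intersection of closed sets, and invariance passes through arbitrary intersections as well (if $p \in T$ and $q \parallel p$, then $p$ lies in every member of $\mathcal{C}$, each of which is invariant, so $q$ does too, giving $q \in T$). The only substantive point is nonemptiness of $T$. Here I would use that $X^\#$ is closed in the compact Hausdorff space $\beta X$ (as remarked in the introduction), hence compact; since $\mathcal{C}$ is a chain of nonempty closed subsets of the compact space $X^\#$, it has the finite intersection property, and therefore $T \neq \varnothing$.

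Thus $T \in \mathcal{F}$ is an upper bound for $\mathcal{C}$ in $(\mathcal{F}, \supseteq)$, and Zorn's lemma supplies a maximal element of $(\mathcal{F}, \supseteq)$, which is precisely a minimal (by inclusion) nonempty closed invariant subset contained in $S$.

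I do not anticipate any real obstacle: the argument is the standard compactness-plus-Zorn recipe, and the two ingredients I need (that $X^\#$ is closed in $\beta X$, and that intersections of invariant sets are invariant) are either stated in the paper or immediate from the definition of invariance. The only minor subtlety worth flagging is to state the order on $\mathcal{F}$ correctly so that ``maximal'' in $(\mathcal{F}, \supseteq)$ translates into ``minimal'' in the inclusion ordering used in the lemma's conclusion.
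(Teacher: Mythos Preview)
Your proposal is correct and follows essentially the same approach as the paper's own proof, which simply notes that intersections of invariant sets are invariant, invokes compactness of $X^\#$, and applies Zorn's lemma. You have just spelled out the details that the paper leaves implicit.
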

\begin{proof} We observe that the intersection of any family of invariant subsets is invariant, use compactness of $X^\#$ and apply the Zorn's lemma.
\end{proof}

\begin{lemma} A closed subset $M$ of $X^\#$ is minimal if and only if $M\ne\varnothing$ and $M=cl\, \overdouble{p}$ for each $p\in M$.
\end{lemma}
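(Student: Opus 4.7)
The plan is to prove the two implications by directly exploiting the invariance-preservation properties already established, with Lemma 2.2 doing the heavy lifting. The key conceptual point is that, since $\parallel$ is an equivalence relation (the ``parallel equivalence'' of \cite{b8}), each set $\overdouble{p}$ is itself an invariant set, namely the $\parallel$-equivalence class of $p$; consequently its closure is automatically a closed invariant set by Lemma 2.2.

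For the forward direction, I would fix a minimal closed invariant $M$ and an arbitrary $p\in M$. Since $M$ is invariant, the definition gives $\overdouble{p}\subseteq M$, and since $M$ is closed, $cl\,\overdouble{p}\subseteq M$. On the other hand, $\overdouble{p}$ is invariant (transitivity of $\parallel$), so Lemma 2.2 ensures $cl\,\overdouble{p}$ is a nonempty closed invariant subset of $M$. The minimality of $M$ then forces $cl\,\overdouble{p}=M$.

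For the converse, assume $M\neq\varnothing$ and $M=cl\,\overdouble{p}$ for every $p\in M$. To check invariance of $M$, take $p\in M$ and $q\parallel p$; then $q\in\overdouble{p}\subseteq cl\,\overdouble{p}=M$. To check minimality, let $N$ be any nonempty closed invariant subset of $M$ and pick $p\in N$. Invariance of $N$ gives $\overdouble{p}\subseteq N$, and closedness of $N$ upgrades this to $cl\,\overdouble{p}\subseteq N$. By hypothesis $cl\,\overdouble{p}=M$, hence $M\subseteq N$, and so $N=M$.

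There is no real obstacle here: the argument is a straightforward unpacking of definitions once Lemma 2.2 is in hand. The only subtle point worth flagging is the tacit use of the fact that $\parallel$ is an equivalence relation, which guarantees that $\overdouble{p}$ itself (not just $M$) is invariant; without this, the forward direction would need a detour through Lemma 2.1 analogous to the proof of Lemma 2.2.
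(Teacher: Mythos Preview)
Your proof is correct and follows essentially the same route as the paper: both directions hinge on the observation that $\overdouble{p}$ is invariant (since $\parallel$ is an equivalence), so that Lemma~2.2 makes $cl\,\overdouble{p}$ a closed invariant set, after which minimality is compared against this closure. Your write-up is slightly more explicit than the paper's (you spell out why $cl\,\overdouble{p}\subseteq M$ in the forward direction and verify invariance of $M$ in the converse), but the argument is the same.
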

\begin{proof} Let $M$ be minimal and $p\in M$. Since $\overdouble{p}$ is invariant, by Lemma 2.2, $cl\, \overdouble{p}$ is invariant so $M=cl\, \overdouble{p}$ by minimality of $M$.

On the other hand, let $M=cl\, \overdouble{p}$ for each $p\in M$. Let $S$ be a closed invariant subset of $M$. We take $p\in S$. Since $cl\, \overdouble{p} \subseteq S$ and $cl\, \overdouble{p} = M$, we have $M=S$ so $M$ is minimal.
\end{proof}

Recall that a function $f:X\times X\to \mathbb{R}^+$ is a {\it semi-metric} if $f$ is symmetric and satisfies the triangle inequality (but $f(x,y) = 0$ does not necessarily imply $x=y$). For each $p\in X^\#$, the metric $d$ on $X$ defines the semi-metric $d_p^\#$ on $\overdouble{p}$ by 
$$d_p^\#(s,q) = \inf\{r\geqslant 0: B(S,r)\in q\mbox{ for each }S\in s\}.$$
If the spectrum $spec(X,d) = \{d(x,y): x,y\in X\}$ is discrete then $d_p^\#$ is a metric.

\section{Minimality}

\begin{theorem} For every ultrafilter $p\in X^\#$ the following two statements are equivalent
\begin{itemize}
\item[{\it (i)}] $cl\, \overdouble{p}$ is a minimal closed invariant subset of $X^\#$;
\item[{\it (ii)}] for every $P\in p$, there exists $r\geqslant 0$ such that $\overdouble{p}\subseteq (B(P,r))^\#$.
\end{itemize}
\end{theorem}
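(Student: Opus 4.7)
The plan is to prove the two implications using Lemma 2.4, which reduces minimality of $M := cl\, \overdouble{p}$ to the condition $M = cl\, \overdouble{q}$ for every $q\in M$.

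For (i)$\Rightarrow$(ii) I argue by contradiction. Supposing (ii) fails, fix $P\in p$ such that for every $r\geqslant 0$ some $q_r\in\overdouble{p}$ satisfies $X\setminus B(P,r)\in q_r$, and set
$$N = \{q\in X^\#: X\setminus B(P,r)\in q\ \text{for every}\ r\geqslant 0\},$$
which is closed in $X^\#$. The key observation is that $N$ is invariant: if $q\in N$ and $q'\parallel q$ is witnessed by $t\geqslant 0$, then $X\setminus B(P,r+t)\in q$ forces $B(X\setminus B(P,r+t),t)\in q'$, and the triangle inequality gives $B(X\setminus B(P,r+t),t)\subseteq X\setminus B(P,r)$, so $X\setminus B(P,r)\in q'$ for every $r$. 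A cluster point $q^*\in M$ of the net $(q_r)$ lies in $M\cap N$, because for each $r_0$ the tail $q_r$ with $r\geqslant r_0$ sits in the clopen set $\overline{X\setminus B(P,r_0)}$. Since $P\in p$ forces $p\notin N$ while $p\in M$, the set $M\cap N$ is a proper, nonempty, closed invariant subset of $M$, contradicting Lemma 2.4.

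For (ii)$\Rightarrow$(i) I first verify that condition (ii) is $\parallel$-invariant: if $p'\parallel p$ with witness $s$ and $P'\in p'$, then $B(P',s)\in p$, and (ii) applied to $B(P',s)$ furnishes $r$ with $\overdouble{p}\subseteq (B(B(P',s),r))^\#\subseteq (B(P',r+s))^\#$, while $\overdouble{p'}=\overdouble{p}$. Now fix $q\in M$; by Lemma 2.4 it suffices to show $\overdouble{p}\subseteq cl\, \overdouble{q}$. Given $p'\in\overdouble{p}$ and $P'\in p'$, the invariance just established yields $r$ with $\overdouble{p}\subseteq (B(P',r))^\#$; closedness of $(B(P',r))^\#$ then gives $q\in M\subseteq (B(P',r))^\#$, hence $B(P',r)\in q$. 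Lemma 2.1 with $Y=P'$ produces $s\in P'^\#$ such that $s\parallel q$, i.e., an $s\in\overdouble{q}$ containing $P'$; as $P'\in p'$ was arbitrary, every basic neighborhood of $p'$ meets $\overdouble{q}$, so $p'\in cl\, \overdouble{q}$.

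The main obstacle I anticipate is the invariance of $N$ in the first direction, which draws on the metric structure through the inclusion $B(X\setminus B(P,r+t),t)\subseteq X\setminus B(P,r)$ and requires ordering the constants so that a single cluster point ends up inside \emph{every} $\overline{X\setminus B(P,r_0)}$ at once. Verifying $p\notin N$, needed to make the resulting closed invariant subset proper, is a small but essential sanity check.
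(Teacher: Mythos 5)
Your argument is correct and is essentially the paper's own proof in light repackaging: in (i)$\Rightarrow$(ii) your cluster point $q^*$ of the net $(q_r)$ is exactly the paper's ultrafilter in $cl\,\overdouble{p}$ extending the filter $\{X\setminus B(P,r):r\geqslant 0\}$, and your closed invariant set $M\cap N$ plays the role of the paper's strictly smaller invariant set $cl\,\overdouble{q}$ (your triangle-inequality computation for the invariance of $N$ is the same estimate the paper uses to show that no $s\parallel q$ can contain $P$). In (ii)$\Rightarrow$(i) the core step is identical --- from $B(P',r)\in q$, Lemma 2.1 places a member of $\overdouble{q}$ in each basic neighbourhood --- the only difference being that you transfer condition (ii) along $\parallel$ so as to handle every $p'\in\overdouble{p}$ directly, where the paper just shows $p\in cl\,\overdouble{q}$ and then invokes invariance of the closure (Lemma 2.2) before applying Lemma 2.4.
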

\begin{proof} $(i) \Rightarrow (ii)$. Suppose the contrary: there is $P\in p$ such that, for each $r\geqslant 0$, $\overdouble{p}\setminus(B(P,r))^\# \ne \varnothing$. We consider a filter $\varphi$ on $X$ with the base $\{X\setminus B(P,r): r\geqslant 0\}$ and choose $q\in cl\, p$ such that $\varphi \subseteq q$ so $X\setminus B(P,r)\in q$ for each $r\geqslant 0$.

We take an arbitrary $s\in \overdouble{q}$ and assume that $P\in s$. Since $s\parallel q$, there is $r_0\geqslant 0$ such that $B(P, r_0)\in q$ contradicting the choice of $q$. Hence $P\notin s$ and $p\notin cl\, \overdouble{q}$. It follows that $cl\, \overdouble{p} \subset cl\, \overdouble{q}$ and, by Lemma 2.1, $cl\, \overdouble{p}$ is not minimal.

$(ii) \Rightarrow (i)$. We take an arbitrary $q\in cl\,\overdouble{p}$. By $(ii)$, for every $P\in p$, there is $r(P)\geqslant 0$ such that $q\in B(P,r(P))^\#$. Using Lemma 2.1, we choose $q(P)\in P^\#$ such that $q\parallel q(P)$. Since $P\in q(P)$ for every $P\in p$, the net $\{q(P): P\in p\}$ converges to $p$. Hence $p\in cl\,\overdouble{q}$ and, by Lemma 2.4, $cl\,\overdouble{p}$ is minimal.
\end{proof}

\begin{theorem} For $q\in X^\#$, $q\in cl\, K(X^\#)$ if and only if each $Q\in q$ is prethick.
\end{theorem}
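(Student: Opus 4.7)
The plan is to reduce the equivalence to the set-level statement: for a single $Q\subseteq X$, $Q$ is prethick if and only if $\overline{Q}\cap K(X^\#)\neq\varnothing$. Since the basic clopen neighbourhoods of $q$ in $X^\#$ are precisely the $\overline{Q}\cap X^\#$ with $Q\in q$, this at once yields the theorem.

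For $(\Rightarrow)$, I fix $q\in cl\,K(X^\#)$ and $Q\in q$, and pick $p\in K(X^\#)\cap\overline{Q}$. Lemma 2.4 identifies $cl\,\overdouble{p}$ as the minimal closed invariant set through $p$, so Theorem 3.1(ii) supplies $r\geqslant 0$ with $\overdouble{p}\subseteq (B(Q,r))^\#$. If $Q$ were not prethick, $B(Q,r)$ would fail to be thick, and some $r'\geqslant 0$ would satisfy $B(X\setminus B(Q,r),r')=X$. Then $B(Y,r')=X\in p$ for $Y:=X\setminus B(Q,r)$, and Lemma 2.1 yields $s\in Y^\#$ parallel to $p$; this $s$ lies in $\overdouble{p}$ but excludes $B(Q,r)$, contradicting Theorem 3.1(ii). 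This direction is a direct application of the tools already on hand.

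For $(\Leftarrow)$, I fix $Q\in q$ prethick and $r_1\geqslant 0$ with $B:=B(Q,r_1)$ thick, and consider the ``$r'$-cores'' $T(r'):=\{y\in X:B(y,r')\subseteq B\}$. Assuming I can build an ultrafilter $p\in X^\#$ with $T(r')\in p$ for every $r'\geqslant 0$, the rest is mechanical: for $q'\in\overdouble{p}$ with parallel-parameter $r_0$, $B(T(r_0),r_0)\in q'$ and $B(T(r_0),r_0)\subseteq B$ by construction, giving $\overdouble{p}\subseteq\overline{B}$ and, after closure, $cl\,\overdouble{p}\subseteq\overline{B}\cap X^\#$. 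Lemma 2.3 then extracts a minimal $M\subseteq cl\,\overdouble{p}\subseteq\overline{B}\cap X^\#$; any $p'\in M$ has $B\in p'$, so Lemma 2.1 with $Y=Q$ produces $s\parallel p'$ with $Q\in s$, and invariance of $M$ places $s\in M\cap\overline{Q}\subseteq K(X^\#)\cap\overline{Q}$.

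The main obstacle is the existence of $p$: the family $\{T(r'):r'\geqslant 0\}\cup\{X\setminus V:V\text{ bounded}\}$ must have the finite intersection property, and since the $T(r')$ are nested this reduces to showing each $T(r')$ is unbounded. Thickness of $B$ only supplies $T(r')\neq\varnothing$ directly, so I plan to argue by contradiction: if $T(r_0)\subseteq B(x_0,r_*)$ were bounded, thickness at arbitrarily large radii $r''\geqslant r_0$ would place witnesses $y_{r''}\in T(r_0)$ inside $B(x_0,r_*)$ with $B(y_{r''},r'')\subseteq B$; the triangle inequality then gives $B(x_0,r''-r_*)\subseteq B$ for every $r''\geqslant r_*$, forcing $B=X$ and hence $T(r_0)=X$, a contradiction. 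This is the single step where thickness of $B$ is used in an essential rather than formal way.
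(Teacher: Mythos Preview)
Your proof is correct and follows the same skeleton as the paper's: in both directions one passes through an ultrafilter $p$ with $\overdouble{p}\subseteq (B(Q,r))^\#$, then uses Lemma~2.3 and Lemma~2.1 to land a point of $K(X^\#)$ in $\overline{Q}$ (or conversely Theorem~3.1 to extract the inclusion from $p\in K(X^\#)$).

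The one genuine difference is that the paper outsources the equivalence ``$Q$ prethick $\Leftrightarrow$ $\exists\,p,\,r$ with $\overdouble{p}=\bigtriangleup_p(B(Q,r))$'' to Theorem~4.2, which is proved later via Theorem~4.1 and a compactness argument on $X^\#$. You unpack this inline: in $(\Rightarrow)$ you use ``not thick $\Rightarrow$ complement large'' plus Lemma~2.1 directly, and in $(\Leftarrow)$ you build $p$ explicitly as an ultrafilter containing all the core sets $T(r')=\{y:B(y,r')\subseteq B\}$. Your sets $T(r')$ are exactly the complements $X\setminus B(X\setminus B,r')$, so the ultrafilter you construct is the same one the paper's compactness argument would produce; your extra work showing each $T(r')$ is unbounded is the price paid for avoiding the compactness shortcut. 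The payoff is that your argument is self-contained and avoids the forward reference to Theorem~4.2; the paper's version is shorter once that theorem is available.
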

\begin{proof} Let $q\in cl\, K(X^\#)$ and $Q\in q$. We pick $p\in K(X^\#)$ such that $Q\in p$. By Theorem 3.1, there exists $r\geqslant 0$ such that $\overdouble{p}\subseteq(B(Q,r))^\#$. Hence $\overdouble{p} = \bigtriangleup_p(B(Q,r))$ and, by Theorem 4.2, $Q$ is prethick.

On the other hand, let each $Q\in q$ be prethick. By Theorem 4.2, there exist $p\in X^\#$ and $r\geqslant 0$ such that $\overdouble{p} = \bigtriangleup_p(B(Q,r))$ so $\overdouble{p}\subseteq(B(Q,r))^\#$ and $cl\,\overdouble{p} = (B(Q,r))^\#$. The closed invariant subset $cl\,\overdouble{p}$ contains some  minimal closed invariant subset $cl\,\overdouble{p_1}$. Since $B(Q,r)\in p_1$, by Lemma 2.1, there is $p_2\in X^\#$ such that $Q\in p_2$ and $p_2\parallel p_1$. Since $cl\,\overdouble{p_2}$ is minimal, $q\in cl\,K(X^\#)$.
\end{proof}

Following \cite{b8}, we denote by $\sim$ the smallest (by incusion) closed (in $X^\#\times X^\#$) equivalence on $X^\#$ such that $\parallel \subseteq \sim$. The quotient $X^\#/\sim$ is a compact Hausdorff space. It is called the {\it corona} $X$ and is denoted by $\check X$. For each $p\in X^\#$, we denote $\check p = \{q\in X^\#: q\sim p\}$. To clarify the virtual equivalence $\sim$, we use the slowly oscillating functions. 

A function $h:(X,d)\to [0,1]$ is called slowly oscillating if, for any $n\in \omega$ and $\varepsilon > 0$, there exists a bounded subset $V$ of $X$ such that, for every $x\in X\setminus V$,
$$diam\, h(B(x,n))<\varepsilon,$$
where $diam\, A=\sup\{|x-y|:x,y\in A\}$. By \cite[Proposition 1]{b10}, $p\sim q$ if and only if $h^\beta(p) = h^\beta(q)$ for every slowly oscillating function $h:X\to [0,1]$.

\begin{theorem} For every unbounded metric space $X$, we have $|\check X| = asden\, X$, where $asden\, X = \min\{|L|: L \mbox{ is a large subset of }X\}$.
\end{theorem}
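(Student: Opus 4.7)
The plan is to prove the equality $|\check X| = asden\,X$ by matching upper and lower bounds. Set $\kappa = asden\,X$ and fix once and for all a large subset $L \subseteq X$ with $|L| = \kappa$, together with $r_0 \geqslant 0$ satisfying $X = B(L, r_0)$.

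For the upper bound $|\check X| \leqslant \kappa$, the idea is to parametrise $\sim$-equivalence classes by elements of $L$. Since $B(L,r_0) = X \in q$ for every $q \in X^\#$, Lemma 2.1 applied with $Y = L$ produces an $s \in L^\#$ with $q \parallel s$, hence $q \sim s$. Thus the restriction $L^\# \to \check X$ of the quotient map is surjective, and the problem reduces to showing $|L^\#/{\sim}| \leqslant |L|$. I would attempt this by constructing a well-defined assignment $\ell : L^\#/{\sim} \to L$: given a class $\check s$ with representative $s \in L^\#$, use the slowly oscillating function characterisation from \cite{b10} to pin down a canonical point $\ell(\check s) \in L$ that is independent of the chosen representative, and show that distinct classes receive distinct labels.

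For the lower bound $|\check X| \geqslant \kappa$, I would exhibit a family $\{p_\ell : \ell \in L\}$ of pairwise $\sim$-inequivalent ultrafilters indexed by $L$. The natural route is to construct, for each $\ell \in L$, a slowly oscillating function $h_\ell : X \to [0,1]$ that behaves as an asymptotic indicator of the region "near $\ell$" (with the scale governed by $r_0$), and then extract $p_\ell \in X^\#$ with $h_\ell^\beta(p_\ell) = 1$ while $h_{\ell'}^\beta(p_\ell) = 0$ for $\ell' \neq \ell$. The slow-oscillation characterisation of $\sim$ then forces the $p_\ell$ into distinct $\sim$-classes, so the corona has at least $|L| = \kappa$ elements.

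The main obstacle is the upper bound. The surjectivity step from Lemma 2.1 is routine, but forcing $|L^\#/{\sim}| \leqslant |L|$ is delicate: $L^\#$ itself carries an enormous family of ultrafilters, so to make the canonical label $\ell(\check s)$ well-defined I would need to show that any two ultrafilters in $L^\#$ whose images in $\check X$ coincide must in fact concentrate near a single point of $L$. I expect this to hinge on a careful argument exploiting both the bounded geometry of $X$ relative to $L$ — that the fibres $B(\ell,r_0)$ have uniformly bounded $d$-diameter — and the uniformity built into the definition of slowly oscillating functions, perhaps by constructing from any discrepancy a slowly oscillating function whose $h^\beta$-values separate the two candidate ultrafilters.
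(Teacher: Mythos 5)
The statement you were given contains a typo in the paper itself: the assertion that the proof actually establishes, and that Corollary 3.4 and the abstract rely on, is $|\check X| = 2^{2^{asden\,X}}$, not $|\check X| = asden\,X$. Taking the printed equality at face value, you set out to match both bounds against $\kappa = asden\,X$, and both halves of your plan break down. For the upper bound, the reduction to $L^\#$ via Lemma 2.1 is fine (it is exactly the paper's first step), but your subsequent goal $|L^\#/{\sim}| \leqslant |L|$ is unattainable: already for $X = \ZZ$ with $L = \ZZ$ the corona is the Higson corona of $\ZZ$, which has cardinality $2^{2^{\aleph_0}}$, so no injective labelling $\check s \mapsto \ell(\check s) \in L$ can exist, however the slowly oscillating functions are exploited. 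The correct upper bound is the crude one: $|\check X| \leqslant |L^\#| \leqslant 2^{2^{|L|}}$.

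For the lower bound, producing $\kappa$ pairwise $\sim$-inequivalent ultrafilters would be far too weak (one needs $2^{2^{\kappa}}$ of them), and the construction you sketch does not work even for that modest goal: the ``region near $\ell$'' at scale $r_0$ around a single point $\ell \in L$ is a bounded set, so any function supported there has $h^\beta(p) = 0$ for every $p \in X^\#$, and your $p_\ell$ cannot be extracted. The paper's argument is genuinely different: it invokes \cite[Theorem 2.3]{b4} to obtain a \emph{thin} subset $Y$ with $|Y| = asden\,X$ whose trace at infinity still has cardinality $|Y|$, notes that consequently $|Y^\#| = 2^{2^{|Y|}}$, and then separates any two distinct $p,q \in Y^\#$ in the corona by a slowly oscillating function via \cite[Lemma 4.2]{b8} --- thinness is what guarantees that disjoint $P \in p$, $Q \in q$ inside $Y$ satisfy that $B(P,n) \cap B(Q,n)$ is bounded for every $n$, which is the hypothesis of that separation lemma. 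The use of a thin set of minimal large cardinality, rather than a large set, is the key idea missing from your proposal.
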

\begin{proof} We take a large subset $L$ of $X$ such that $|L| = asden\, X$. Given any $q\in X^\#$, we use Lemma 2.1 to choose $p\in L^\#$ such that $q\parallel p$. It follows that  $|\check X|\leqslant 2^{2^{|Y|}}$, the number of all ultrafilters on $Y$, so $|\check X|\leqslant 2^{2^{asden\, X}}$.

We check that$|\check X|\geqslant 2^{2^{asden\, X}}$. By \cite[Theorem 2.3]{b4}, there is a thin subset $Y$ of $X$ such that $|Y| = asden\, X$ and $|Y|_X = |Y|$ where $|Y|_X = \min\{|Y\setminus V|: V\mbox{ is a bounded subset of }X\}$. We fix $x_0\in X$ and note that $Y^\# = \cap_{n\in\omega}\overline{(Y\setminus B(x_0,n))}$. Since $|Y\setminus B(x_0,n)| = |Y|$ for each $n\in \omega$, we have $|Y^\#| = 2^{2^|Y|}$. Let $p,q$ be distinct ultrafilters from $Y^\#$. We partition $Y\setminus V = P\cup Q$ so that $P\in p$, $Q\in q$. Since $Y$ is thin, $B(P,n)\cap B(Q,n)$ is bounded for each $n\in \omega$. By \cite[Lemma 4.2]{b8}, there is a slowly oscillating function $h:X\to [0,1]$ such that $h^\beta(p) \ne h^\beta(q)$. By \cite[Proposition 1]{b10}, $\check p \ne \check q$. Hence $|\check X| \geqslant |Y^\#| = 2^{2^{asden\, X}}$.
\end{proof}

\begin{corollary} For every unbounded metric space $X$, the set $\mathcal{M}$ of all minimal closed invariant subsets of $X$ has cardinality $2^{2^{asden\, X}}$.
\end{corollary}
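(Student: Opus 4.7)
The plan is to establish matching upper and lower bounds $|\mathcal{M}|\leqslant 2^{2^{asden\, X}}$ and $|\mathcal{M}|\geqslant 2^{2^{asden\, X}}$.

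For the upper bound I would fix a large subset $L\subseteq X$ with $|L|=asden\, X$ and apply Lemma 2.1: every $q\in X^\#$ is $\parallel$-related to some $p\in L^\#$. Given $M\in\mathcal{M}$ and any $q\in M$, the resulting $p$ lies in $M$ by invariance, so $M\cap L^\#\ne\varnothing$. Distinct elements of $\mathcal{M}$ are disjoint, since if $p\in M_1\cap M_2$ then $cl\,\overdouble{p}$ is a closed invariant subset of each, and Lemma 2.4 forces $M_1=cl\,\overdouble{p}=M_2$. Hence $M\mapsto M\cap L^\#$ sends distinct minimal subsets to pairwise disjoint nonempty subsets of $L^\#$, giving $|\mathcal{M}|\leqslant|L^\#|\leqslant 2^{2^{|L|}}=2^{2^{asden\, X}}$.

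For the lower bound I would pass to the quotient $X^\#\to\check X$. Since $\parallel\,\subseteq\,\sim$, each $\sim$-class $\check p$ is $\parallel$-invariant, and it is closed in $X^\#$ because $\sim$ is a closed equivalence relation and $\check X$ is Hausdorff. Lemma 2.3 then supplies at least one minimal closed invariant subset inside each $\check p$. Conversely, every $M\in\mathcal{M}$ is contained in a single $\sim$-class: for $p\in M$, Lemma 2.4 identifies $M$ with $cl\,\overdouble{p}$, and $\overdouble{p}\subseteq\check p$ together with closedness of $\check p$ yields $M\subseteq\check p$. Thus distinct $\sim$-classes contribute distinct members of $\mathcal{M}$, and Theorem 3.3 delivers $|\mathcal{M}|\geqslant|\check X|=2^{2^{asden\, X}}$.

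The point I expect to require the most care is the correspondence between minimal subsets and $\sim$-classes: each minimal subset must sit inside exactly one class, while each class, being both closed and invariant, must contain at least one such minimal subset. Once this bookkeeping is settled, the two cardinality inequalities fall out immediately from Lemma 2.1 and Theorem 3.3 respectively.
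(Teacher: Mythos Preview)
Your proposal is correct and follows essentially the same approach as the paper's own proof: the lower bound comes from the fact that each $\sim$-class $\check p$ is a closed invariant subset and hence (by Lemma~2.3) contains a minimal one, combined with Theorem~3.3, while the upper bound comes from picking a large $L$ with $|L|=asden\,X$ and applying Lemma~2.1. You have simply spelled out the details (disjointness of distinct minimal subsets, closedness and invariance of $\check p$, containment of each $M$ in a single $\sim$-class) that the paper's terse two-sentence proof leaves implicit.
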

\begin{proof} For each $p\in X^\#$, closed invariant subset $\check p$ contains some member of $\mathcal{M}$ so, by Theorem 3.3, $|\mathcal{M}|\geqslant 2^{2^{asden\, X}}$. To verify the converse inequality, we take a large subset $L$ of $X$ such that $|L| = asden\, X$ and apply Lemma 2.1.
\end{proof}

Let $p$, $q$ be ultrafilters from $X^\#$ such that $\overdouble{p}, \overdouble{q}$ are countable and $cl\, \overdouble{p} \cap cl\, \overdouble{q} \ne \varnothing$. Applying the Frolik lemma \cite[Theorem 3.40]{b6}, we conclude that either $cl\, \overdouble{p} \subseteq cl\, \overdouble{q}$ or $cl\, \overdouble{q} \subseteq cl\, \overdouble{p}$. I do not know if this statement holds for arbitrary $p,q\in X^\#$, such that $cl\, \overdouble{p} \cap cl\, \overdouble{q} \ne \varnothing$.

\section{Ultracompanions}

\begin{theorem}\label{t_4_1} A subset $Y$ of $X$ is large if and only if $\bigtriangleup_p(Y) \ne \varnothing$ for every $p\in X^\#$.
\end{theorem}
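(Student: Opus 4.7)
The plan is to prove the two directions separately, with the forward direction a direct application of Lemma 2.1 and the backward direction proceeding by contrapositive via the construction of a bad ultrafilter.

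For the forward direction, assume $Y$ is large, so $X=B(Y,r)$ for some $r\geqslant 0$. Fix any $p\in X^\#$. Since $X\in p$, we have $B(Y,r)\in p$, and by Lemma 2.1 there exists $s\in Y^\#$ with $p\parallel s$. Then $s\in\overdouble{p}\cap Y^\# = \bigtriangleup_p(Y)$, so this $p$-companion is nonempty. This part is essentially a one-line application of the technical lemma of Section 2.

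For the converse, I will argue the contrapositive: assuming $Y$ is not large, I will produce some $p\in X^\#$ with $\bigtriangleup_p(Y)=\varnothing$. If $Y=\varnothing$ then $Y^\#=\varnothing$ and the statement is trivial, so suppose $Y\neq\varnothing$ and fix $y_0\in Y$. The key preliminary observation is that the complements $X\setminus B(Y,r)$ are unbounded for every $r\geqslant 0$: if some $X\setminus B(Y,r)$ were contained in a ball $B(x_0,R)$, then setting $r'=\max\{r,R+d(y_0,x_0)\}$ would give $X=B(Y,r')$, contradicting the assumption that $Y$ is not large. Thus the family $\{X\setminus B(Y,r):r\geqslant 0\}$ consists of unbounded sets and, being decreasing, has the finite intersection property together with the filter of cobounded sets. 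Extend it to an ultrafilter $p\in X^\#$ so that $X\setminus B(Y,r)\in p$ for every $r\geqslant 0$.

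Now suppose toward a contradiction that some $q\in\bigtriangleup_p(Y)$ exists. Then $Y\in q$ and $p\parallel q$, so there is $r_0\geqslant 0$ such that $B(Q,r_0)\in p$ for every $Q\in q$. Taking $Q=Y$ yields $B(Y,r_0)\in p$, which contradicts $X\setminus B(Y,r_0)\in p$. Hence $\bigtriangleup_p(Y)=\varnothing$, which completes the contrapositive.

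The only point that requires any genuine care is the boundedness observation used to ensure the family $\{X\setminus B(Y,r):r\geqslant 0\}$ extends to an element of $X^\#$; everything else is a formality once Lemma 2.1 and the definition of the parallel relation are in hand.
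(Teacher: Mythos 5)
Your proof is correct. The forward direction is exactly the paper's: largeness gives $B(Y,r)\in p$ for every $p\in X^\#$, and Lemma 2.1 produces a member of $\bigtriangleup_p(Y)$. In the converse direction, however, you take a genuinely different route. The paper argues directly: assuming all $p$-companions are nonempty, it picks for each $p\in X^\#$ a radius $r_p$ with $B(Y,r_p)\in p$, covers the compact set $X^\#$ by the sets $(B(Y,r_p))^\#$, extracts a finite subcover, and concludes that $X\setminus B(Y,r)$ is bounded for $r=\max\{r_{p_1},\dots,r_{p_n}\}$, whence $X=B(Y,r')$ for a slightly larger $r'$. You instead prove the contrapositive: if $Y$ is not large, each $X\setminus B(Y,r)$ is unbounded (your computation with $r'=\max\{r,R+d(y_0,x_0)\}$ is right), so the family $\{X\setminus B(Y,r): r\geqslant 0\}$ together with the cobounded sets extends to an ultrafilter $p\in X^\#$, and any $q\in\bigtriangleup_p(Y)$ would force $B(Y,r_0)\in p$ for some $r_0$ (taking $Q=Y\in q$ in the definition of $p\parallel q$), a contradiction. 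Both arguments are sound and of comparable length; the paper's covering argument exploits compactness of $X^\#$ and yields the witnessing radius directly from a finite subcover, while your construction of a ``bad'' ultrafilter is the same device the paper itself uses in the proof of Theorem 3.1 (the filter with base $\{X\setminus B(P,r): r\geqslant 0\}$), so your route is stylistically consonant with the rest of the paper and avoids the covering step at the cost of an explicit filter-extension argument; your care in adjoining the cobounded filter to guarantee $p\in X^\#$ is exactly the point that needed checking, and you checked it.
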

\begin{proof} Suppose that $Y$ is large and pick $r\geqslant 0$ such that $X=B(Y,r)$. We take an arbitrary $p\in X^\#$. Since $B(Y,r)\in p$, by Lemma 2.1, there is $q\in Y^\#$ such that $p\parallel q$ so $q\in \bigtriangleup_p(Y)$ and $\bigtriangleup_p(Y)\ne \varnothing$.

Assume that $\bigtriangleup_p(Y) \ne \varnothing$ for each $p\in X^\#$. Given any $p\in X^\#$, we choose $r_p\geqslant 0$ such that $B(Y,r_p)\in p$. Then we consider a covering of $X^\#$ by the subsets $\{(B(Y,r_p))^\#: p\in X^\#\}$ and choose its finite subcovering $(B(Y,r_{p_1}))^\#, \ldots, (B(Y,r_{p_n}))^\#$. Let $r=\max\{r_{p_1}, \ldots, r_{p_n}\}$. Since $X\setminus B(Y,r)$ is bounded, we can choose $r' \geqslant r$ such that $X=B(Y,r')$ so $Y$ is large.
\end{proof}

\begin{theorem} For a subset $Y$ of $X$, the following statements hold
\begin{itemize}
\item[{\it (i)}] $Y$ is thick if and only if there exists $p\in X^\#$ such that $\overdouble{p} = \bigtriangleup_p(Y)$;
\item[{\it (ii)}] $Y$ is prethick if and only if there exists $p\in X^\#$ and $r\geqslant 0$ such that $\overdouble{p} = \bigtriangleup_p(B(Y,r))$;
\item[{\it (iii)}] $Y$ is small if and only if, for every $p\in X^\#$ and each $r\geqslant 0$, $\overdouble{p} \ne \bigtriangleup_p(B(Y,r))$.
\end{itemize}
\end{theorem}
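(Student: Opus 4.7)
The plan is to prove (i) directly and then derive (ii) and (iii) as immediate corollaries: (ii) is (i) applied to $B(Y,r)$, since $Y$ is prethick iff $B(Y,r)$ is thick for some $r$; and (iii) is the contrapositive of (ii), using the cited equivalence between being small and not being prethick. For (i), note first that $\overdouble{p}=\bigtriangleup_p(Y)$ is equivalent to $\overdouble{p}\subseteq Y^\#$. Observe also that parallelity is symmetric with the same parameter: the condition ``$B(Q,r)\in p$ for every $Q\in q$'' is symmetric in $p,q$, because $B(Q,r)\in p$ iff every $P\in p$ meets $Q$ within distance $r$, by the ultrafilter criterion.

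For $(\Rightarrow)$, suppose $Y$ is thick and introduce the witness sets
$$T_r=\{y\in Y:B(y,r)\subseteq Y\},$$
which are nonempty by thickness and nested downward. A triangle-inequality computation shows that if any $T_r$ is bounded then $Y=X$ and the claim is trivial; otherwise the filter base $\{T_r\setminus V:r\geqslant 0,\ V\mbox{ bounded in }X\}$ has the FIP and extends to some $p\in X^\#$ with $T_r\in p$ for every $r$. For any $q\parallel p$ with parameter $r$ we then have $B(T_r,r)\in q$, and $B(T_r,r)\subseteq Y$ by the very definition of $T_r$; hence $Y\in q$ and $q\in Y^\#$.

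For $(\Leftarrow)$, argue by contradiction: if $Y$ is not thick, fix $r$ such that no ball $B(y,r)$ with $y\in Y$ lies in $Y$. Combined with $Y\in p$ (a consequence of $\overdouble{p}\subseteq Y^\#$, since $p\in\overdouble{p}$), this gives $B(x,r)\cap(X\setminus Y)\neq\varnothing$ for every $x\in X$. I would then verify that the family $\{B(P,r)\cap(X\setminus Y):P\in p\}$ is downward directed, has the FIP, and consists of unbounded sets --- the last because a witness picked inside $B(x,r)$ for $x\in P$ with $d(x_0,x)$ large stays at distance at least $d(x_0,x)-r$ from the basepoint. Extending this family to some $q\in X^\#$, the inclusion $B(P,r)\cap(X\setminus Y)\subseteq B(P,r)$ forces $B(P,r)\in q$ for every $P\in p$, so $q\parallel p$; yet $X\setminus Y\in q$, contradicting $\overdouble{p}\subseteq Y^\#$.

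The main obstacle is keeping control of unboundedness throughout: one must ensure at every stage that the constructed ultrafilter actually lies in $X^\#$ and not merely in $\beta X$. This is what forces the case analysis with possibly bounded $T_r$ in the forward direction, and the verification that $B(P,r)\cap(X\setminus Y)$ inherits unboundedness from $P$ in the converse.
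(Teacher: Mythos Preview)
Your argument is correct, but the paper proves (i) by a much shorter indirect route: it simply observes that $Y$ is thick if and only if $X\setminus Y$ is not large, and then invokes Theorem~4.1. Indeed, by that theorem, $X\setminus Y$ fails to be large exactly when there is some $p\in X^\#$ with $\bigtriangleup_p(X\setminus Y)=\varnothing$; and since every $q\in\overdouble p$ is an ultrafilter in $X^\#$, the condition $\bigtriangleup_p(X\setminus Y)=\varnothing$ is equivalent to $\overdouble p\subseteq Y^\#$, i.e.\ to $\overdouble p=\bigtriangleup_p(Y)$. Parts (ii) and (iii) then follow exactly as you derive them.

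Your direct construction has its own merits: it is self-contained (it does not rely on Theorem~4.1 or on the thick/large complementarity), and it makes the witnessing ultrafilter explicit as any $p\in X^\#$ containing all the ``deep interior'' sets $T_r=\{y\in Y:B(y,r)\subseteq Y\}$. The paper's route, on the other hand, packages the compactness argument once and for all in Theorem~4.1 and then harvests (i)--(iii) essentially for free. Your handling of the unboundedness bookkeeping (the $Y=X$ case when some $T_r$ is bounded, and the verification that $B(P,r)\cap(X\setminus Y)$ is unbounded) is accurate; just note that in the backward direction you should explicitly adjoin the cobounded filter $\{X\setminus V:V\text{ bounded}\}$ to the family before extending, so that the resulting $q$ lands in $X^\#$ --- you flag this concern in the final paragraph but it deserves to be carried out in the construction itself.
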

\begin{proof} $(i)$. We note that $Y$ is thick if and only if $X\setminus Y$ is not large and apply Theorem \ref{t_4_1}.

$(ii)$ follows from $(i)$.

$(iii)$ Remind that $Y$ is small if and only if $Y$ is not prethick and apply $(ii)$.
\end{proof}

For a natural number $n$, a subset $Y$ of $X$ is called {\it $n$-thin} if, for each $r\geqslant 0$, there exists a bounded subset $V$ of $X$ such that $|B(y,r)\cap Y|\leqslant n$ for each $y\in Y\setminus V$. Clearly, $Y$ is thin if and only if $Y$ is 1-thin.

\begin{theorem}
For a subset $Y$ of $X$ and a natural number $n$, the following statements are equivalent
\begin{itemize}
\item[{\it (i)}] $Y$ is $n$-thin;
\item[{\it (ii)}] $Y$ can be partitioned into $\leqslant n$ thin subsets;
\item[{\it (iii)}] $|\bigtriangleup_p(Y)|\leqslant n$ for each $p\in X^\#$.
\end{itemize}
\end{theorem}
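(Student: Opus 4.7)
The plan is to split the equivalence into the piece $(i)\Leftrightarrow(iii)$, which is the genuinely new content and handled via Stone-\v{C}ech techniques, and the piece $(i)\Leftrightarrow(ii)$, a partition theorem for $n$-thin subsets that is purely ballean-combinatorial. The implication $(ii)\Rightarrow(iii)$ then comes essentially for free by applying $(i)\Rightarrow(iii)$ with $n=1$ to each of the $n$ thin pieces, and $(ii)\Rightarrow(i)$ can be deduced from $(ii)\Rightarrow(iii)\Rightarrow(i)$, so the cycle closes.

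For $(i)\Rightarrow(iii)$, I argue by contradiction. Assume $p\in X^\#$ admits $n+1$ pairwise distinct ultrafilters $q_1,\dots,q_{n+1}\in\bigtriangleup_p(Y)$ and choose pairwise disjoint $Q_j\in q_j$ with $Q_j\subseteq Y$. Taking the maximum of the parallelity constants, a common radius $r$ gives $B(Q_j,r)\in p$ for every $j$, hence $\bigcap_j B(Q_j,r)\in p$. Let $V$ be bounded witnessing $n$-thinness of $Y$ at scale $2r$; replace each $Q_j$ by $Q_j\setminus V$, which remains in $q_j$. Picking $x_0\in\bigcap_j B(Q_j\setminus V,r)$ and $y_j\in Q_j\setminus V$ with $d(x_0,y_j)\leqslant r$, the $n+1$ points $y_j$ are pairwise distinct by disjointness of the $Q_j$'s, pairwise within distance $2r$, and lie in $Y\setminus V$, so $|B(y_1,2r)\cap Y|\geqslant n+1$, contradicting $n$-thinness.

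For $(iii)\Rightarrow(i)$, I argue contrapositively. If $Y$ is not $n$-thin, some $r\geqslant 0$ makes $A=\{y\in Y:|B(y,r)\cap Y|\geqslant n+1\}$ unbounded. Fix a basepoint and recursively extract a sequence $y_1,y_2,\dots\in A$ with $d(y_k,y_l)>2r$ for $k\neq l$ and distances to the basepoint tending to infinity; then the sets $T_k=B(y_k,r)\cap Y$ are pairwise disjoint, each containing $n+1$ chosen points $z_k^0,\dots,z_k^n$. Define $g_i\colon\{y_k\}\to Y$ by $g_i(y_k)=z_k^i$, pick any $p\in\{y_k\}^\#$, and set $p_i=g_i^\beta(p)$. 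Since each $g_i$ displaces by at most $r$, Lemma 2.1 (or a direct verification) gives $p_i\parallel p$ and $p_i\in Y^\#$; the images $g_i(\{y_k\})$ are pairwise disjoint, so the $p_i$'s are distinct, and we have $n+1$ elements of $\bigtriangleup_p(Y)$.

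The main obstacle is the direction $(i)\Rightarrow(ii)$, namely that every $n$-thin subset partitions into at most $n$ thin subsets. A naive Zorn argument fails because a chain of thin subsets need not have thin union (witness $\omega=\bigcup_m\{0,1,\dots,m\}$, an ascending chain of finite, hence thin, sets whose union is not thin). The proper argument proceeds by induction on $n$, at each stage extracting a thin $Z\subseteq Y$ whose complement is $(n-1)$-thin through a scale-uniform combinatorial construction that is purely ballean-theoretic and independent of the ultrafilter machinery of this paper. I would either cite this decomposition result from the ballean literature or prove it in a separate lemma by a transfinite construction.
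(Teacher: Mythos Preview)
Your proposal is correct. The overall architecture matches the paper's: both outsource the equivalence $(i)\Leftrightarrow(ii)$ to the combinatorial result of Lutsenko--Protasov (\cite{b7} in the paper), and both handle the link to $(iii)$ via Stone--\v{C}ech arguments. The difference is in how that link is made. The paper reduces $(ii)\Leftrightarrow(iii)$ to the case $n=1$ (thin $\Leftrightarrow$ $|\bigtriangleup_p(Y)|\leqslant 1$) and proves only that, relying on the partition $Y=Y_1\cup\cdots\cup Y_n$ and the identity $\bigtriangleup_p(Y)=\bigcup_i\bigtriangleup_p(Y_i)$ for the forward direction. You instead prove $(i)\Leftrightarrow(iii)$ directly for arbitrary $n$: your contradiction argument for $(i)\Rightarrow(iii)$ and your pushforward construction for $\neg(i)\Rightarrow\neg(iii)$ are the natural $n$-point versions of the paper's $n=1$ arguments. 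What your route buys is that the implication $(iii)\Rightarrow(i)$ for general $n$ is established explicitly, whereas in the paper's reduction this direction is not transparently recoverable from the $n=1$ case alone without passing back through the cited $(i)\Leftrightarrow(ii)$; your argument makes the ultrafilter content of the theorem fully self-contained. One small cosmetic point: the appeal to Lemma~2.1 in your $(iii)\Rightarrow(i)$ step is not quite the right citation---the parallelity $p_i\parallel p$ follows immediately from $d(y_k,g_i(y_k))\leqslant r$, as your parenthetical ``direct verification'' indicates.
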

\begin{proof}
The equivalence $(i)\Leftrightarrow(ii)$ was proved in \cite[Theorem 2.1]{b7}. To verify $(ii)\Leftrightarrow(iii)$, it suffices to show that $Y$ is thin if and only if $|\bigtriangleup_p(Y)|\leqslant 1$ for each $p\in X^\#$.

Let $Y$ be thin, $Y\in p$ and $q\in \bigtriangleup_p(Y)$. We choose $r\geqslant 0$ such that $B(P,r)\in q$ for each $P\in p$. Then we pick a bounded subset $V$ of $X$ such that $B(y,r)\cap Y = \{y\}$ for each $y\in Y\setminus V$. Let $P\in p$ and $P\subseteq Y\setminus V$. Since $Y\in q$ and $B(P,r)\in q$, we have $P\in q$. Hence $q=p$.

Suppose that there are two distinct parallel ultrafilters $p,q$ in $Y^\#$. We take $r\geqslant 0$ such that $B(P,r)\in q$ for each $P\in p$. Then we choose $P_0\in p$, $Q_0\in q$ such that $P_0\subset Y$, $Q_0\subset Y$ and $P_0\cap Q_0 = \varnothing$. We note that $\{x\in P_0: B(x,r)\cap Q_0\}\in q$. Since this set is unbounded and $P_0\cap Q_0 = \varnothing$, we conclude that $Y$ is not thin.
\end{proof}

Following \cite{b2}, we say that a subset $Y$ of a metric space $X$ {\it has asymptotically isolated $m$-balls}, $m\in\omega$ if, for every $n\in \omega$, there exists $n' > n$ and $y\in Y$ such that $B(y,n')\setminus B(y,m) = \varnothing$. If $Y$ has asymptotically isolated $m$-balls for some $m\in \omega$, we say that $Y$ {\it has asymptotically isolated balls}.

A metric space is called {\it asymptotically scattered} \cite{b14} if each unbounded subset of $X$ has asymptotically isolated balls. We say that a subset $Y$ of $X$ is asymptotically scattered if the metric space $(Y, d_Y)$ is asymptotically scattered.

Given $p\in X^\#$, we say that a subset $S\subseteq \overdouble{p}$ is {\it ultrabounded} with respect to $p$ if there is $r\geqslant 0$ such that, for each $q\in S$ and every $Q\in q$, we have $B(Q,r)\in p$. If $S$ is ultrabounded with respect to $p$ then $S$ is ultrabounded with respect to any $q\in \overdouble{p}$ so we shall say that $S$ is ultrabounded in $\overdouble{p}$. In other words, $S$ is ultrabounded if $S$ is bounded in the semi-metric space $(\overdouble{p}, d_p^\#)$.

\begin{theorem}
A subset $Y$ of $X$ is asymptotically scattered if and only if, for every $p\in X^\#$, the subset $\bigtriangleup_p(Y)$ is ultrabounded in $\overdouble{p}$.
\end{theorem}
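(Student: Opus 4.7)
I will prove both directions by contrapositive. The converse direction $(\Leftarrow)$ admits an explicit sparse-sequence construction; the direct direction $(\Rightarrow)$ requires extracting an unbounded witness subset from a $d_p^\#$-unbounded family, and is where the main obstacle lies.

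For the contrapositive of $(\Leftarrow)$, fix an unbounded $Z \subseteq Y$ with no asymptotically isolated balls. Unwinding the negated definition gives, for each $m \in \omega$, an integer $n_m$ (taken non-decreasing in $m$) and a function $g_m \colon Z \to Y$ with $m < d(z, g_m(z)) \leq n_m + 1$ for every $z \in Z$. I build a sparse sequence $(z_k)$ in $Z$ by induction so that $d(z_k, z_1) > k$ and $d(z_j, z_k) > 2(n_j + 1) + j$ for every $j < k$; at each step the forbidden region is a finite union of balls in $X$, hence bounded, so unboundedness of $Z$ permits the choice, and monotonicity of $n_\bullet$ then makes every tail $T_m := \{z_k \colon k \geq m\}$ be $(2(n_m + 1) + m)$-sparse. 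Take any $p \in X^\#$ extending the cofinite filter on $\{z_k\}$; every element of $p$ must meet every tail, so is unbounded, giving $p \in X^\#$, and $Z \in p$ gives $p \in Y^\#$. Extend each $g_m$ arbitrarily to $X$ and set $q_m := g_m^\beta(p)$. The bound $d(z, g_m(z)) \leq n_m + 1$ for $z \in \{z_k\} \in p$ gives $q_m \parallel p$ via parameter $n_m + 1$, and $g_m[\{z_k\}] \subseteq Y$ gives $Y \in q_m$, so $q_m \in \bigtriangleup_p(Y)$. The crucial point is that for $Q := g_m[T_m] \in q_m$, the sparsity of $T_m$ forces $B(Q, m) \cap T_m = \varnothing$: for $z_k, z_j \in T_m$ one has $d(z_k, g_m(z_j)) \geq d(z_j, z_k) - (n_m + 1) > m$ when $j \neq k$, and $d(z_k, g_m(z_k)) > m$ by the choice of $g_m$. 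Hence $B(Q, m) \notin p$, giving $d_p^\#(q_m, p) > m$, so $\sup_{q \in \bigtriangleup_p(Y)} d_p^\#(q, p) = \infty$ and $\bigtriangleup_p(Y)$ is not ultrabounded.

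For the contrapositive of $(\Rightarrow)$, suppose $\bigtriangleup_p(Y)$ is not ultrabounded for some $p \in X^\#$; replacing $p$ with any $q \in \bigtriangleup_p(Y)$ (which exists by the assumption), we may assume $Y \in p$. For each $m$, pick $q_m \in \bigtriangleup_p(Y)$ with $r_m := d_p^\#(q_m, p) > m$ and $Q_m \in q_m$, $Q_m \subseteq Y$, $B(Q_m, m) \notin p$; set $P_m := Y \setminus B(Q_m, m) \in p$. Symmetric parallelism gives $B(P_m, r_m) \in q_m$ and $B(Q_m, r_m) \in p$, so refining $Q_m$ to $W_m := Q_m \cap B(P_m, r_m) \in q_m$ ensures every $w \in W_m$ has a $Y$-neighbor in $P_m$ at distance in $(m, r_m]$; dually $P_m^W := P_m \cap B(W_m, r_m) \in p$ is an unbounded subset of $Y$ each of whose points has a $Y$-neighbor at distance in $(m, r_m]$. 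Consequently the set $E_m := \{y \in Y \colon \exists y' \in Y,\ m < d(y, y') \leq r_m\}$ lies in $p$ for every $m$. The desired witness $Z$ would be any unbounded subset of $\bigcap_m E_m$: each of its points would then have, for every $m$, a $Y$-neighbor at distance in $(m, r_m]$, verifying the absence of asymptotically isolated balls at every scale. The main obstacle is that this countable intersection of $p$-sets need not itself lie in $p$ and may a priori be bounded; overcoming this likely requires a careful diagonalization — refining the choices of $q_m$ via Lemma 2.1 on successive tail intersections so that the $E_m$ interlock and their tail intersection remains unbounded — and is the technical crux of the argument.
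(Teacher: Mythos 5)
Your first direction (if $Y$ is not asymptotically scattered, then $\bigtriangleup_p(Y)$ fails to be ultrabounded for a suitable $p$) is correct and complete: the sparse sequence $(z_k)$, the ultrafilter concentrated on its tails, and the pushforwards $q_m=g_m^\beta(p)$ produce, in a self-contained way, the same kind of family of parallel ultrafilters at $d_p^\#$-distance at least $m$ from $p$ that the paper obtains by invoking the construction from the proof of Theorem~3 of \cite{b14}; the sets $Z_i$ there play the role of your $g_m[T_m]$.

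The second direction, however, is left genuinely unfinished, and you say so yourself: after obtaining $E_m\in p$ for every $m$, you remark that an unbounded subset of $\bigcap_m E_m$ would finish the proof, note that this intersection need not lie in $p$ and may be bounded, and defer the ``careful diagonalization'' as ``the technical crux''. That crux is exactly what must be supplied, and it is short; the point is that you do not need a set inside the full intersection, only nested choices and a diagonal sequence. Concretely, replace your $P_m^W$ by $P'_m:=P_0^W\cap\dots\cap P_m^W\in p$ and pick $z_m\in P'_m$ with $d(z_m,x_0)>m$. Then $Z=\{z_m:m\in\omega\}$ is an unbounded subset of $Y$, and for every $j$ each $z_m$ with $m\geqslant j$ has a point of $Y$ at distance in $(j,r_j]$; the finitely many exceptional points $z_i$, $i<j$, are harmless because the definition of asymptotically isolated $j$-balls allows an arbitrary threshold $n$, and each such $z_i$ has some $Y$-point at distance greater than $j$ (as $Y$ is unbounded), so a sufficiently large threshold absorbs them. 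Hence $Z$ has no asymptotically isolated balls and $Y$ is not asymptotically scattered. This is precisely the paper's argument (its decreasing sequence $(P_n)$ in $p$ with $Q_n\in q_n$ satisfying the two-sided distance estimates, followed by the choice of $z_n\in P_n$ escaping to infinity), so the obstacle you flag is not a real one --- but as submitted, your proof of the implication ``$\bigtriangleup_p(Y)$ ultrabounded for all $p$ implies $Y$ asymptotically scattered'' stops before its final step and is therefore incomplete.
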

\begin{proof}
Suppose that $Y$ is not asymptotically scattered and show how to find $p\in Y^\#$ such that $\bigtriangleup_p(Y)$ is not ultrabounded in $\overdouble{p}$. The proof of Theorem 3 from \cite{b14} gives us a sequence $(\{z_{n0}, z_{n1}, \ldots, z_{nn} \})_{n\in \omega}$ of subsets of $Y$ and an increasing sequence $(k_n)_{n\in \omega}$ in $\omega$ such that 
\begin{itemize}
\item[(1)] $d(z_{00}, z_{n0}) \geqslant n, n\in \omega$
\item[(2)] $k_i \leqslant d(z_{n0},z_{ni})\leqslant k_{i+1}$, $0 < i \leqslant n < \omega$;
\item[(3)] $d(x,y)\geqslant k_i$ for all $k\in Z_0 = \{z_{00}, z_{10}, \ldots\}$, $y\in Z_i = \{z_{ii}, z_{i+1\,i}, z_{i+2\,i}, \ldots\}$, $0 < i < \omega$.
\end{itemize}
By (1), the set $Z_0$ is unbounded in $X$. We fix an arbitrary $p\in Z_0^\#$. For each $i >0$, we use (2) and Lemma 2.1 to find $p_i\in Z_i^\#$ such that $p_i\parallel p$. Since $\{p_i: 0 < i < \omega\} \subseteq \bigtriangleup_p(Y)$, by (3), $\bigtriangleup_p(Y)$ is not ultrabounded in $\overdouble{p}$.

We suppose that $\bigtriangleup_p(Y)$ is not ultrabounded in $\overdouble{p}$ for some $p\in Y^\#$ and define a function $d_p: \overdouble{p} \to \mathbb{R}^+$ by the rule

$$d_p(q) = \inf\{r\in \mathbb{R}^+: B(Q,r)\in p \mbox{ for each }Q\in q\}.$$
Since $\bigtriangleup_p(Y)$ is not ultrabounded, one can choose a sequence $(q_n)_{n\in \omega}$ in $\bigtriangleup_p(Y)$ and an increasing sequence $(k_n)_{n\in \omega}$ in $\omega$ such that 
\begin{itemize}
\item[(4)] $k_n < d_p(q_n) < k_{n+1}$, $n\in \omega$.
\end{itemize}
Using (4), we choose a decreasing sequence $(P_n)_{n\in \omega}$ in $p$ and a sequence $(Q_n)_{n\in \omega}$, $Q_n\in q_n$ such that
\begin{itemize}
\item[(5)] $P_n \subset Y, Q_n \subset Y$, $n\in \omega$.
\item[(6)] $k_n \leqslant d(x,y) \leqslant k_{n+1}$ for all $x\in P_n$, $y\in Q_n$.
\end{itemize}
For each $n\in \omega$, we pick $z_n\in P_n$ such that the set $Z = \{z_n: n\in \omega\}$ is unbounded in $X$. By (5) and (6), $Z$ has no isolated balls in $(Y,d_Y)$. Hence $Y$ is not asymptotically scattered.
\end{proof}

\section{Ballean context}

Following \cite{b18}, we say that a {\em ball structure} is a triple $\BB=(X,P,B)$, where $X$, $P$ are non-empty sets and, for every $x\in X$
and $\alpha\in P$, $B(x,\alpha)$ is a subset of $X$ which is called a {\em ball of radius $\alpha$} around $x$. It is supposed that $x\in B(x,\alpha)$ for all $x\in X$ and $\alpha\in P$. The set $X$ is called the {\em support} of $\BB$, $P$ is called the set of {\em radii}.

Given any $x\in X$, $A\subseteq X$, $\alpha\in P$, we set
$$B^\ast(x.\alpha)=\{y\in X: x\in B(y,\alpha)\},\text{ } B(A,\alpha)=\bigcup_{a\in A}B(a,\alpha).$$
A ball structure $\BB=(X,P,B)$ is called a {\em ballean} if

$\bullet$ for any $\alpha,\beta\in P$, there exist $\alpha',\beta'\in P$ such that, for every $x\in X$,
$$B(x,\alpha)\subseteq B^\ast(x,\alpha'),\text{ }B^\ast(x,\beta)\subseteq B(x,\beta');$$

$\bullet$ for any $\alpha,\beta\in P$, there exists $\gamma\in P$ such that, for every $x\in X$,
$$B(B(x,\alpha),\beta)\subseteq B(x,\gamma).$$

$\bullet$ for any $x,y \in X$, there is $\alpha\in P$ such that $y\in B(x,\alpha)$.

A ballean $\BB$ on $X$ can also be determined in terms of entourages of the diagonal $\Delta_X$ of $X\times X$ (in this case it is called a {\em coarse structure} \cite{b20}) and can be considered as an asymptotic counterpart of a uniform topological space.

Let $\BB_1=(X_1,P_1,B_1)$ and $\BB_2=(X_2,P_2,B_2)$ be balleans. A mapping $f:X_1\to X_2$ is called a $\prec$-mapping if, for every $\alpha\in P_1$, there exists $\beta\in P_2$ such that, for every $x\in X$ $f(B_1(x,\alpha))\subseteq B_2(f(x),\beta)$. A bijection $f: X_1\to X_2$ is called an {\it asymorphism} if $f, f^{-1}$ are $\prec$-mappings.

Every metric space $(X,d)$ defines the ballean $(X,\mathbb{R}^+,B_d),$ where
$B_d(x,r)=\{y\in X:d(x,y)\leqslant r\}$. For criterion of metrizability of balleans see \cite[Theorem 2.1.1]{b18}.

Let $G$ be a group with the identity $e$. We denote by $\mathcal{F}_G$ the family of all finite subsets of $G$ containing $e$ and get the group ballean $\BB(G) = (G, \mathcal{F}_G, B)$ where $B(g,F) = Fg$ for all $g\in G$, $F\in \mathcal{F}_G$.

We observe that all definitions in this paper do not use the metric on $X$ but only balls so can be literally rewritten for any balleans in place of metric spaces. Moreover, a routine verification ensures that Theorems 3.1, 3.2, 4.1, 4.2 remains true for any balleans.

Theorem 3.3 and Corollary 3.4 fail to be true for the following ballean from \cite{b19}. Let $\kappa$ be an infinite cardinal, $\varphi$ be a free ultrafilter on $\kappa$ such that $|\Phi| = \kappa$ for each $\Phi \in \varphi$. We consider the ballean $B(\kappa, \varphi) = (\kappa, \varphi, B)$ where $B(x, \Phi) = \{x\}$ if $x\in \Phi$, and $B(x, \Phi) = X\setminus \Phi$ if $x\notin \Phi$. Then $\check X$ is the singleton $\{\varphi\}$.

G.~Bergman constructed (see \cite{b16}) a group $G$ of cardinality $\aleph_2$ containing a 2-thin subset which cannot be partitioned into two thin subsets. Hence Theorem 4.3 does not hold for the group ballean $\BB(G)$.

I do not know whether Theorem 4.4 can be generalized to all balleans.

Coming back to the group ballean, we note that $G^\# = G^*$ and, for $p,q\in G^*$, $p\parallel q$ if and only if $q = gp$ for some $g\in G$. Hence $\overdouble{p} = Gp$, $cl\,\overdouble{p} = (\beta G)p$ is the principal left ideal in $\beta G$ generated by $p$, and the minimal closed invariant subsets in $G^\#$ are exactly the minimal left ideals of the semigroup $\beta G$. For ultracompanion's characterizations of subsets of a group see \cite{b17}.

A metric space is called {\it uniformly locally finite} if, for each $n\in \omega$, there is $c(n)\in\omega$ such that $|B(x,n)|\leqslant c(n)$ for each $x\in X$. By \cite[Theorem 1]{b9} there exists a group $G$ of permutations of $X$ such that the metric ballean is asymorphic to the ballean $\BB(G, X) = (X, \mathcal{F}_G, B)$, where $B(x,F) = Fx$ for all $F\in \mathcal{F}_G$, $x\in X$. The action of $G$ on $X$ can be extended to the action of $G$ on $\beta X$ and then to the action of $\beta G$ on $\beta X$. If $p,q\in X^\#$ and $q\parallel p$ then $q = gp$ for some $g\in G$ so $cl\,\overdouble{p} = (\beta G)p$.

Department of Cybernetics

Kyiv University

Volodimirska 64

01033 Kyiv

Ukraine

i.v.protasov@gmail.com 

\begin{thebibliography}{99}
\bibitem{b1} T.~Banakh, O.~Chervak, L.~Zdomsky, {\it On character of points in the Higson corona of a metric space}, Comment. Math. Univ. Carolin., {\bf 50}, 2(2013), 159--178.
\bibitem{b2} T.~Banakh, I.~Zarichnyi, {\it Characterizing the Cantor bi-cube in asymptotic categories}, Groups, Geometry and Dynamics, {\bf 5}(2011), 691--728.
\bibitem{b3} H.~Dales, A.~Lau, D.~Strauss, {\it Banach Algebras on semigroups and their compactifications}, Mem. Amer. Math. Soc. {\bf 2005} (2010).
\bibitem{b4} M.~Filali, I.~V.~Protasov,{\it\/ Spread of balleans}, Appl. Gen. Topology, {\bf 9} (2008), 169--175.
\bibitem{b5} M.~Filali, I.~Protasov, {\it Ultrafilters and Topologies on Groups}, Math. Stud. Monogr. Ser., Vol. 13, VNTL Publishers, Lviv, 2010.
\bibitem{b6} N.~Hindman, D.~Strauss, {\it Algebra in the Stone-$\check{C}$ech Compactification}, 2nd edition, de Grueter, 2012.
\bibitem{b7} Ie.~Lutsenko, I.V.~Protasov, {\it Thin subsets of balleans}, Appl. Gen. Topology, {\bf 1} (2010), 89--93.
\bibitem{b8} I.V.~Protasov. {\it Normal ball structures}, Math.Stud. {\bf 20} (2003), 3--16.
\bibitem{b9} I.V.~Protasov. {\it Balleans of bounded geometry and $G$-spaces}, Math.Stud. {\bf 30} (2008), 61--66.
\bibitem{b10} I.V.~Protasov. {\it Coronas of balleans}, Topology Appl. {\bf 149} (2005), 149--161.
\bibitem{b11} I.V.~Protasov. {\it Dynamical equivalences on $G^*$}, Topology Appl. {\bf 155} (2008), 1394--1402.
\bibitem{b12} I.V.~Protasov. {\it Coronas of ultrametric spaces}, Comment. Math. Univ. Carolin. {\bf 52}, 2 (2011), 303--307.
\bibitem{b13} I.V.~Protasov. {\it Weak $P$-points in coronas of $G$-spaces}, Topology Appl. {\bf 159} (2012), 587--592.
\bibitem{b14} I.V.~Protasov. {\it Asymptotically scattered spaces}, preprint available at arxiv:1212:0364
\bibitem{b15} I.~Protasov, T.~Banakh, {\it\/ Ball Structures and Colorings of Graphs and Groups}, Math. Stud. Monogr. Ser., Vol. {\bf 11}, VNTL Publishers, Lviv, 2003.
\bibitem{b16} I.~Protasov, S.~Slobodianiuk, {\it\/Thin subsets of groups}, Ukr. Math. J. (to appear), preprint available at arxiv:1308:1497
\bibitem{b17} I.~Protasov, S.~Slobodianiuk, {\it\/Ultracompanions of subsets of a group}, preprint available at arxiv:1308:1499
\bibitem{b18} I. Protasov, M. Zarichnyi, {\it General Asymptology}, Math. Stud. Monogr. Ser., Vol. 12, VNTL Publishers, Lviv, 2007
\bibitem{b19} O. Protasova, {\it Maximal balleans}, Appl. Gen. Topology. {\bf 7} (2006), 151--163.
\bibitem{b20} J.~Roe, {\it Lectures on Coarse Geometry}, Amer. Math. Soc., Providence, R.I, 2003.
\bibitem{b21} S.~Todorcevic, {\it Introduction to Ramsey Spaces}, Princeton Univ. Press, 2010.
\bibitem{b22} Y.~Zelenyuk, {\it Ultrafilters and Topologies on Groups}, de Grueter, 2012.
\end{thebibliography}
\end{document}